\providecommand{\U}[1]{\protect\rule{.1in}{.1in}}
\newtheorem{Thm}{Theorem}[section]
\newtheorem{Cor}[Thm]{Corollary}
\newtheorem{Lem}[Thm]{Lemma}
\newtheorem{Def}{Definition}[section]
\newtheorem{remark}[Thm]{Remark}
\theoremstyle{definition}
\theoremstyle{remark}
\numberwithin{equation}{section}
\let\pdfoutput=\undefined\fi
\begin{document}
\pagestyle{myheadings}

\begin{center}
{\Large\textbf{The generalized hand-eye calibration matrix equation $AX-YB=C$ over dual quaternions}}	
\footnote{This research is supported by the National Natural
Science Foundation of China [grant number 12371023, 12271338].

\par
{}* Corresponding author.
\par  Email address: wqw@t.shu.edu.cn; wqwshu9@126.com (Q.W. Wang), xlm1817@163.com (L.M. Xie), hzh19871126@126.com (Z.H. He). 
}

\bigskip

{ \textbf{Lv-Ming Xie$^{a}$, Qing-Wen Wang$^{a,b,*}$, Zhuo-Heng He$^{a,b}$}}

{\small
\vspace{0.2cm}

$a$. Department of Mathematics and Newtouch Center for Mathematics, Shanghai University, Shanghai 200444, People's Republic of China\\
$b$. Collaborative Innovation Center for the Marine Artificial Intelligence, Shanghai University, Shanghai 200444, People's Republic of China

}

\end{center}

\vspace{0.4cm}

\begin{quotation}
\noindent\textbf{Abstract:}
In the field of robotics research, a crucial applied problem is the hand-eye calibration issue, which involves solving the matrix equation $AX = YB$. However, this matrix equation is merely a specific case of the more general dual quaternion matrix equation $AX-YB=C$, which also holds significant applications in system and control theory. Therefore, we in this paper establish the solvability conditions of this generalized hand-eye calibration dual quaternion matrix equation and provide a general expression for its solutions when it is solvable.  As an example of applications, we design a scheme for color image encryption and decryption based on this dual quaternion matrix equation. From the experiment, it can be observed that the decrypted images are almost identical to the original images. Therefore, the encryption and decryption scheme designed using this dual quaternion matrix equation is highly feasible.
\vspace{3mm}

\noindent\textbf{Keywords:} Hand-eye calibration; Dual quaternion; Matrix equation; General solution; Color image
\newline
\noindent\textbf{2010 AMS Subject Classifications:\ }{\small 15A03; 15A09; 15A24; 15B33}\newline
\end{quotation}

\section{\textbf{Introduction}}

\par\setlength{\parindent}{2em}  Following Hamilton's identification of quaternions in 1843 \cite{Hamilton}, quaternions have found widespread applications in various fields such as signal processing, color image processing, and quantum physics (e.g., \cite{Cyrus,Bihan,Assefa,Pei,Graydon}). Consequently, research on quaternions has experienced rapid development. Subsequently, concepts like split quaternions, dual quaternions, and commutative quaternions emerged. Among these, dual quaternions, proposed by Clifford in 1873 \cite{Clifford}, play a crucial role not only in engineering domains like unmanned aerial vehicles control and small satellite systems, but also have extensive applications in computer vision, robotics, 3D motion modeling, and other fields, as evidenced by references \cite{YuanUAV,WangBJ,Figueredo,WangBJ1,KenrightBJ,ChengBJ,DaniilidisBJ,Wang2024}. This has garnered significant attention from scholars.

\par\setlength{\parindent}{2em}In the field of robotics research, a significant application challenge is the hand-eye calibration problem, which has consistently garnered attention. In 1994, Zhuang et al. \cite{Zhuang} formulated the hand-eye calibration problem as a solution to the matrix equation 
\begin{equation}\label{eq1.2}
	AX=YB.
\end{equation} As research progressed, it was discovered that employing dual quaternions, which can simultaneously represent rotation and translation, allows the transformation of the corresponding matrix equation \eqref{eq1.2} for the hand-eye calibration problem into a dual quaternion equation represented by $q_{A}q_{X}=q_{Y}q_{B}$, as detailed in the references \cite{Chen1,Li1,Zhao,Qi202307}. 
Evidently, the dual quaternion equation $q_{A}q_{X}=q_{Y}q_{B}$ constitutes a specific case of the dual quaternion matrix equation \eqref{eq1.2}, which is encompassed by the generalized hand-eye calibration dual quaternion matrix equation 
\begin{equation}\label{eq1.1}
	AX-YB=C.
\end{equation}
We are aware that when dealing with low-rank approximation problems for large-scale data, matrix decomposition can be employed, and the matrix equation \eqref{eq1.1} plays a crucial role in the matrix decomposition of the dual matrix \cite{Xu}. Furthermore, the matrix equation \eqref{eq1.1}, which is also a generalized Sylvester-type matrix equation, has important applications in the regulator problem with internal stability (RPIS) \cite{Cheng1977} and the non-interacting control problem \cite{Woude}. Additionally, the special case of the matrix equation \eqref{eq1.1}, $AX+YB=I$, is closely related to the algebraic regulator problem \cite{Bengtsson} and the output regulation problem with internal stability (ORIS) \cite{Wolovich}. 
Due to its significant applications in system and control theory, the matrix equation \eqref{eq1.1} has been a subject of ongoing research. For example, in 1952, Roth \cite{Roth} provided the solvability conditions for the matrix equation \eqref{eq1.1}. In 1977, Flanders et al. \cite{Flanders} presented an alternative method to Roth's proof for establishing the solvability of matrix equation \eqref{eq1.1} under certain conditions. Subsequently, in 1979, Baksalary et al. \cite{Baksalary} introduced another necessary and sufficient condition for the solvability of matrix equation \eqref{eq1.1}. Following this, the matrix equation \eqref{eq1.1} and its generalized forms have been extensively investigated, as documented in \cite{Emre,Guralnick,ZAK,Wimmer,Dmytryshyn,AnK}. Progress in the research on analytical solutions to various types of quaternion matrix equations, including dual quaternions, can be found in references \cite{Xie2023,Chen2024,Zhang2024,Liu2019,Liu2020,Liu20202}.

\par\setlength{\parindent}{2em}In addition, in the era of big data, encrypting crucial information is imperative, especially when it comes to image data. Considering that a quaternion matrix can represent a color image, we contemplate employing a dual quaternion matrix to simultaneously represent two color images. From this perspective, utilizing the dual quaternion matrix equation \eqref{eq1.1} enables the construction of encryption and decryption processes for color images. Since the generalized Sylvester matrix equation enjoys wide-ranging applications and in order to enrich its theoretical framework, we are now investigating the solutions of the dual quaternion matrix equation \eqref{eq1.1}. Based on our current knowledge, this is the first time solving a generalized Sylvester-type matrix equation over the dual quaternion algebra. 

\par\setlength{\parindent}{2em}\par\setlength{\parindent}{2em}An outline of this paper is as follows. In Section 2, we provide definitions for quaternions, dual quaternions, and dual quaternion matrices, along with  delineating essential operations related to them. Additionally, we also present some key lemmas. In Section 3, our focus is on establishing the necessary and sufficient conditions for the solvability of the dual quaternion matrix equation \eqref{eq1.1}. We also derive a general expression for the solutions when the matrix equation \eqref{eq1.1} is solvable. Furthermore, we investigate the solutions to the dual quaternion matrix equation \eqref{eq1.2} and explore various special cases of the matrix equation \eqref{eq1.1}. In Section 4, we present the application of the generalized Sylvester-type dual quaternion matrix equation \eqref{eq1.1} in the encryption and decryption process of color images. Finally, we present a summary of the key content of this paper in Section 5.
\par\setlength{\parindent}{2em}We begin by providing a concise overview of the notations utilized throughout this paper. Assume that the real number field, the quaternion skew field, and dual quaternions are denoted by $\mathbb{R}$, $\mathbb{H}$, and $\mathbb{DQ}$, respectively. The collection of all $k \times n$ matrices over $\mathbb{DQ}$ is denoted as $\mathbb{DQ}^{k \times n}$, and similarly, $\mathbb{H}^{k \times n}$ denotes the set of all $k \times n$ matrices over $\mathbb{H}$. If $A\in\mathbb{H}^{k \times n}$, the notation $r(A)$ denotes the rank of the matrix $A$.

\section{\textbf{ Preliminary}}

\par\setlength{\parindent}{2em}In this section, we review the definitions of quaternions, dual quaternions, and dual quaternion matrices, outlining essential operations associated with them. Additionally, we introduce crucial lemmas that form the basis for deriving the main results.

\subsection{Quaternions}
\par\setlength{\parindent}{2em}A quaternion $b$ can be expressed in the form $b=b_{0}+b_{1}\mathbf{i}+b_2\mathbf{j}+b_3\mathbf{k}$, where $b_{0},b_{1},b_2,b_3\in \mathbb{R}$ and $\mathbf{i},\mathbf{j},\mathbf{k}$ represent the imaginary components of quaternions, satisfying 
$
\mathbf{i}^2=\mathbf{j}^2=\mathbf{k}^2=-1, \mathbf{ij}=-\mathbf{ji}=\mathbf{k},\mathbf{jk}=-\mathbf{kj}=\mathbf{i},
$ and $\mathbf{ki}=-\mathbf{ik}=\mathbf{j}$. The following are some basic operations regarding quaternions.
For $a=a_{0}+a_{1}\mathbf{i}+a_2\mathbf{j}+a_3\mathbf{k},b=b_{0}+b_{1}\mathbf{i}+b_2\mathbf{j}+b_3\mathbf{k}\in\mathbb{H}$, then we have
$$
\begin{aligned}
	a+b&=(a_{0}+b_{0})+(a_{1}+b_{1})\mathbf{i}+(a_2+b_{2})\mathbf{j}+(a_3+b_{3})\mathbf{k}=b+a,\\
	ab&=(a_0 b_0-a_1 b_1-a_2 b_2-a_3 b_3)+(a_0 b_1+a_1 b_0+a_2 b_3
	-a_3 b_2)\mathbf{i}
	+(a_0 b_2\\
	&+a_2 b_0+a_3 b_1-a_1 b_3)\mathbf{j}+(a_0 b_3+a_3 b_0+a_1 b_2-a_2 b_1)\mathbf{k}.
	\\	ba&=(a_0 b_0-a_1 b_1-a_2 b_2-a_3 b_3)+(a_0 b_1+a_1 b_0-a_2 b_3+a_3 b_2)\mathbf{i}+(a_0 b_2\\
	&+a_2 b_0-a_3 b_1+a_1 b_3)\mathbf{j}+(a_0 b_3+a_3 b_0-a_1 b_2+a_2 b_1)\mathbf{k}.	
\end{aligned}
$$
It is evident that $ab\neq ba$.
\par\setlength{\parindent}{2em}Having established fundamental definitions for quaternions, we can now extend our understanding by introducing basic definitions for dual quaternions in a similar fashion.
\subsection{Dual quaternions}
The set of dual quaternions is defined as
$$
\mathbb{DQ}=\{d=d_{0}+d_{1}\epsilon:d_{0},d_{1}\in \mathbb{H} \},
$$
where $d_{0},d_{1}$ represent the standard part and the infinitesimal part of $d$, respectively.  The infinitesimal unit $\epsilon$ satisfies $\epsilon^2=0$ and commutes with multiplication in real numbers, complex numbers, and quaternions. Qi et al. \cite{Qi2022} has provided a partial order relation for dual numbers, along with some fundamental operations on dual quaternions. Suppose that $p=p_{0}+p_{1}\epsilon, q=q_0+q_1\epsilon\in\mathbb{DQ}$, then we have
$$
\begin{aligned}
	p+q &=(p_{0}+q_{0})+(p_{1}+q_{1})\epsilon,\\
	pq &=p_0q_0+(p_0q_1+p_1q_0)\epsilon.
\end{aligned}
$$
Since quaternions are non-commutative under multiplication, dual quaternions are also non-commutative under multiplication.
\begin{remark}
	Due to the property $\epsilon^2=0$, a non-zero dual quaternion is not necessarily invertible, implying that dual quaternions have zero divisors. However, every non-zero quaternion does have an inverse.
\end{remark}
\subsection{Dual quaternion matrices}
\par\setlength{\parindent}{2em} We denote a dual quaternion matrix as $C=C_0+C_1\epsilon\in\mathbb{DQ}^{k\times l}$,  with $C_0$ and $C_1$ belonging to the quaternion matrix set $\mathbb{H}^{k\times l}$. Due to the differences between dual quaternion matrices and matrices over conventional number fields, we first present the definition of equality for two dual quaternion matrices.
\begin{Def}\emph{\cite{Xie2023}}\label{de2.2.1}
	If $A=A_0+A_1\epsilon\in\mathbb{DQ}^{m\times n},B=B_0+B_1\epsilon\in\mathbb{DQ}^{m\times n}$, then
	$$
	A=B\Longleftrightarrow A_0=B_0,\ A_1=B_1.
	$$
\end{Def}
In a manner akin to dual quaternions, we can provide addition and multiplication operations for dual quaternion matrices. Assume that $B=B_0+B_1\epsilon\in\mathbb{DQ}^{l\times n}, C=C_0+C_1\epsilon\in\mathbb{DQ}^{k\times l}, D=D_0+D_1\epsilon\in\mathbb{DQ}^{k\times l},$ then we have
$$
\begin{aligned}
	C+D=(C_{0}+D_{0})+(C_{1}+D_{1})\epsilon,\\
	CB=C_0B_0+(C_0B_1+C_1B_0)\epsilon.
\end{aligned}
$$
For $C \in \mathbb{DQ}^{m \times n}$, there are additional operations such as $C^{*} , C^{\eta^*}$, as detailed in reference \cite{Xie2023}.
\subsection{Some significant lemmas}
\par\setlength{\parindent}{2em}
In the preceding subsection, definitions and relevant operations of dual quaternions and dual quaternion matrices were presented. In order to investigate the analytical solutions of the dual quaternion matrix equation \eqref{eq1.1}, we first introduce some crucial lemmas. For the sake of convenience in the description, we need some notations. The Moore-Penrose inverse of a quaternion matrix $A$ is defined as $A^\dagger$, and it follows these equations:
$$
AXA=A,  X A X=X, (A X)^{*}=A X, (X A)^{*}=X A.
$$
Furthermore, the projectors $I - A^\dagger A$ and $I - A A^\dagger$ are represented by the symbols $L_A$ and $R_A$, respectively. It is clear that
$$
R_A=\left(R_A\right)^2=R_A^{\dagger}, \
L_A=\left(L_A\right)^2=L_A^{\dagger}.
$$
\begin{Lem}\emph{\cite{Xie2023}}\label{lem2.1}
	Assume that $A=A_0+A_1\epsilon$ and $B=B_0+B_1\epsilon$ are given with appropriate sizes over $\mathbb{DQ}$, set
	$$
		A_{2}=A_1L_{A_0}, B_{2}=B_1-A_1A_0^{\dagger}B_0, C_{11}=R_{A_0}B_{2}, A_3=R_{A_0}A_{2}.
	$$
	Then the dual quaternion matrix equation $AX=B$ is solvable if and only if
	$$
	R_{A_0}B_0=0,\ R_{A_3}C_{11}=0
	$$
	or
	$$r\left[\begin{matrix}
		A_0&B_0
	\end{matrix}\right]=r(A_0), \ \ r\left[\begin{matrix}
		A_0 & B_{1} &A_1\\
		0 & B_0 & A_0
	\end{matrix}\right]=r\left[\begin{matrix}
		A_0 & A_{1}\\
		0 & A_0
	\end{matrix}\right].$$
	In this case, the general solution can be expressed as $X=X_0+X_1\epsilon$, where
	$$
	\begin{array}{l}
		X_0=A_0^{\dagger}B_0+L_{A_0}U, \\
		X_1=A_0^{\dagger}(B_{2}-A_{2}U)+L_{A_0}U_1,\\
		U=A_3^{\dagger}C_{11}+L_{A_3}U_2,
	\end{array}
	$$
	and $U_i (i = 1,2)$ are arbitrary matrices over $\mathbb{H}$.
\end{Lem}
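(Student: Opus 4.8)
The plan is to reduce the dual quaternion matrix equation $AX=B$ to a pair of coupled quaternion matrix equations by separating standard and infinitesimal parts, and then to solve these two quaternion equations successively with the Moore-Penrose inverse machinery recalled above. Writing the unknown as $X=X_0+X_1\epsilon$ and using the multiplication rule $AX=A_0X_0+(A_0X_1+A_1X_0)\epsilon$ together with the equality criterion of Definition~\ref{de2.2.1}, the equation $AX=B$ is equivalent to the system
\begin{equation*}
A_0X_0=B_0,\qquad A_0X_1+A_1X_0=B_1.
\end{equation*}
Thus everything is transferred to ordinary quaternion matrix equations, where the standard solvability and general-solution theory for $A_0X_0=B_0$ applies.

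First I would dispose of the standard-part equation $A_0X_0=B_0$: it is consistent precisely when $R_{A_0}B_0=0$, and in that case its general solution is $X_0=A_0^{\dagger}B_0+L_{A_0}U$ with $U$ a free quaternion matrix. Substituting this $X_0$ into the infinitesimal-part equation and moving the known terms to the right, the second equation becomes $A_0X_1=B_2-A_2U$, where $B_2=B_1-A_1A_0^{\dagger}B_0$ and $A_2=A_1L_{A_0}$ are exactly the quantities introduced in the statement. The subtlety is that $U$ is not a fixed datum but a parameter inherited from $X_0$, so $X_1$ and $U$ must be determined jointly rather than one after the other.

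The key step is to read $A_0X_1=B_2-A_2U$ first as a solvability constraint on $U$. For a given $U$ this equation is consistent in $X_1$ iff $R_{A_0}(B_2-A_2U)=0$, i.e. $A_3U=C_{11}$ with $A_3=R_{A_0}A_2$ and $C_{11}=R_{A_0}B_2$; this latter quaternion equation is solvable iff $R_{A_3}C_{11}=0$, with general solution $U=A_3^{\dagger}C_{11}+L_{A_3}U_2$. Feeding any such $U$ back, one checks $R_{A_0}(B_2-A_2U)=C_{11}-A_3U=R_{A_3}C_{11}=0$, so $X_1=A_0^{\dagger}(B_2-A_2U)+L_{A_0}U_1$ solves the infinitesimal part for an arbitrary $U_1$. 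Collecting the two conditions $R_{A_0}B_0=0$ and $R_{A_3}C_{11}=0$ then yields the asserted criterion together with the displayed expressions for $X_0$, $X_1$, and $U$.

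Finally I would establish the equivalence with the rank form of the conditions. Here I expect the main obstacle: translating the projector identities $R_{A_0}B_0=0$ and $R_{A_3}C_{11}=0$ into the stated rank equalities requires the standard block-rank formulas over $\mathbb{H}$, such as $r[A,\ B]=r(A)+r(R_AB)$ and its counterpart for the stacked coefficient block, applied after eliminating the auxiliary matrices $A_2,B_2,A_3,C_{11}$ by elementary block row and column operations. Carrying out these reductions cleanly, so that the $2\times 2$ block ranks collapse exactly to $r[A_0,\ B_0]=r(A_0)$ and to the second displayed rank identity, is the most delicate bookkeeping in the argument.
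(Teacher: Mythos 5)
Your proposal is correct: the paper itself imports Lemma~\ref{lem2.1} from \cite{Xie2023} without proof, but your argument—splitting $AX=B$ into the standard-part equation $A_0X_0=B_0$ and the infinitesimal-part equation $A_0X_1+A_1X_0=B_1$, solving successively via Moore--Penrose inverses so that the free parameter $U$ in $X_0$ is pinned down by the consistency constraint $A_3U=C_{11}$, and then converting the projector conditions to rank equalities via block elimination and Lemma~\ref{lem2.3}—is exactly the method the paper uses for its own Theorem~\ref{th3.1}, of which this lemma is the one-sided prototype. The only part you leave as a sketch is the rank-form translation, but the route you indicate (the formulas $r[A,\ B]=r(A)+r(R_AB)$ and $r\begin{bmatrix}A\\ B\end{bmatrix}=r(B)+r(AL_B)$ together with Lemma~\ref{lem2.3}) is precisely how the paper carries out the analogous step in the proof of Theorem~\ref{th3.1}.
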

\begin{Lem}\emph{\cite{Xie2020}}\label{lem2.2}
	Suppose that $A_0,A_1,B_0,B_1$  and $C$ are given over $\mathbb{H}$, denote
	$$
	A=R_{A_0}A_1, A_2=R_{A_0}C, B_2=B_0L_{B_1}, B_3=CL_{B_1}.
	$$
	Then the matrix equation 
		$	A_0X_0B_0+A_0X_1B_1+A_1X_2B_1=C$ 
	is solvable if and only if 
	$$
	R_{A}A_2 =0, R_{A_0}B_3 =0,B_3L_{B_2}=0,
	$$
	or
	$$
	\begin{array}{c}
		r\begin{pmatrix}
			A_0 &A_1&C
		\end{pmatrix}=r\begin{pmatrix}
			A_0 &A_1
		\end{pmatrix},r\begin{pmatrix}
			B_1 &0\\
			C &A_0
		\end{pmatrix}=r(B_1)+r(A_0), \\
		r\begin{pmatrix}
			C\\
			B_0\\
			B_1
		\end{pmatrix}=r\begin{pmatrix}
			B_0\\
			B_1
		\end{pmatrix}.	
	\end{array}
	$$
	In this case, the general solution can be expressed as
	$$
	\begin{array}{l}
		X_0=A_0^{\dagger}B_3B_2^{\dagger}+L_{A_0}U_5+U_6R_{B_2},\\
		X_1=A_0^{\dagger}(C-A_0X_0B_0-A_1X_2B_1)B_1^{\dagger}+L_{A_0}U_1+U_2R_{B_1},\\
		X_2=A^{\dagger}A_2B_1^{\dagger}+L_{A}U_3+U_4R_{B_1},
	\end{array}
	$$
	where $U_i(i=1,\cdots,6)$ represent arbitrary matrices over $\mathbb{H}$ with the suitable dimensions.
\end{Lem}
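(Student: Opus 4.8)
The plan is to peel the three-unknown equation down to a chain of elementary matrix equations of the form $AXB=D$, for which the classical criterion over $\mathbb{H}$ applies: such an equation is solvable if and only if $R_AD=0$ and $DL_B=0$, in which case the general solution is $X=A^{\dagger}DB^{\dagger}+L_AU+VR_B$. The crucial observation is that the two outer factors $A_0$ (on the left) and $B_1$ (on the right) act as sieves, since left-multiplication by $R_{A_0}$ annihilates the two $A_0$-terms and right-multiplication by $L_{B_1}$ annihilates the two $B_1$-terms. For necessity, I would take a solution $(X_0,X_1,X_2)$ and left-multiply the equation by $R_{A_0}$; using $R_{A_0}A_0=0$ this leaves $R_{A_0}A_1X_2B_1=R_{A_0}C$, i.e. $AX_2B_1=A_2$, an elementary equation that is therefore solvable, forcing $R_AA_2=0$ and $A_2L_{B_1}=0$. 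Dually, right-multiplying by $L_{B_1}$ and using $B_1L_{B_1}=0$ yields $A_0X_0B_2=B_3$, whose solvability forces $R_{A_0}B_3=0$ and $B_3L_{B_2}=0$. Since $A_2L_{B_1}=R_{A_0}CL_{B_1}=R_{A_0}B_3$, these four conditions collapse to the three stated ones.

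For sufficiency, I would reverse the reduction and construct the solution explicitly. Assuming $R_AA_2=0$, $R_{A_0}B_3=0$, and $B_3L_{B_2}=0$, the reduced equations $AX_2B_1=A_2$ and $A_0X_0B_2=B_3$ are both solvable, so I set $X_2=A^{\dagger}A_2B_1^{\dagger}+L_AU_3+U_4R_{B_1}$ and $X_0=A_0^{\dagger}B_3B_2^{\dagger}+L_{A_0}U_5+U_6R_{B_2}$. With these fixed, the leftover equation in $X_1$ is $A_0X_1B_1=D$ where $D=C-A_0X_0B_0-A_1X_2B_1$, so I must verify its consistency conditions $R_{A_0}D=0$ and $DL_{B_1}=0$. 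Substituting $X_2$ and simplifying with $A_0L_{A_0}=0$, $AL_A=0$, and $R_{B_1}B_1=0$ reduces $R_{A_0}D$ to $A_2-AA^{\dagger}A_2B_1^{\dagger}B_1$, which vanishes precisely by $R_AA_2=0$ and $A_2L_{B_1}=0$; likewise $DL_{B_1}$ reduces to $B_3-A_0A_0^{\dagger}B_3B_2^{\dagger}B_2$, which vanishes by $R_{A_0}B_3=0$ and $B_3L_{B_2}=0$. Hence $X_1=A_0^{\dagger}DB_1^{\dagger}+L_{A_0}U_1+U_2R_{B_1}$ solves the residual equation, and the triple $(X_0,X_1,X_2)$ is exactly the general solution displayed in the statement.

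It remains to recast the three projector conditions into the stated rank equalities, for which I would invoke the standard rank identities $r[\,A\ B\,]=r(A)+r(R_AB)$, $r\begin{pmatrix}A\\ B\end{pmatrix}=r(B)+r(AL_B)$, and $r(R_ACL_B)=r\begin{pmatrix}C&A\\ B&0\end{pmatrix}-r(A)-r(B)$. The first, applied to $R_AA_2=0$ with $A=R_{A_0}A_1$ and $A_2=R_{A_0}C$, gives $r[\,A_0\ A_1\ C\,]=r[\,A_0\ A_1\,]$; the third, applied to $R_{A_0}B_3=R_{A_0}CL_{B_1}=0$, gives $r\begin{pmatrix}B_1&0\\ C&A_0\end{pmatrix}=r(B_1)+r(A_0)$ after a harmless block-row permutation; and the second, applied to $B_3L_{B_2}=0$ after writing $\begin{pmatrix}B_3\\ B_2\end{pmatrix}=\begin{pmatrix}C\\ B_0\end{pmatrix}L_{B_1}$, gives $r\begin{pmatrix}C\\ B_0\\ B_1\end{pmatrix}=r\begin{pmatrix}B_0\\ B_1\end{pmatrix}$.

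The step I expect to be the main obstacle is the consistency check in the sufficiency argument: solving the two projected equations for $X_0$ and $X_2$ independently does not obviously leave a solvable equation for $X_1$, and the verification that both $R_{A_0}D=0$ and $DL_{B_1}=0$ hold relies on the coupling identity $A_2L_{B_1}=R_{A_0}B_3$, which is precisely what ties the two otherwise separate reductions together and makes the three conditions jointly sufficient rather than merely necessary.
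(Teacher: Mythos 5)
Your proof is correct: the paper states Lemma \ref{lem2.2} without proof, importing it from \cite{Xie2020}, and your argument---projecting with $R_{A_0}$ and $L_{B_1}$ to peel off the elementary equations $AX_2B_1=A_2$ and $A_0X_0B_2=B_3$, checking consistency of the residual equation $A_0X_1B_1=D$ via the coupling identity $A_2L_{B_1}=R_{A_0}B_3$, and converting the projector conditions to rank equalities by the standard identities (the mixed one being precisely the paper's Lemma \ref{lem2.3})---is exactly the route taken in the cited source and mirrored one level up in the paper's own proof of Theorem \ref{th3.1}. Your key computations $R_{A_0}D=A_2-AA^{\dagger}A_2B_1^{\dagger}B_1=A_2L_{B_1}$ and $DL_{B_1}=B_3-A_0A_0^{\dagger}B_3B_2^{\dagger}B_2=B_3L_{B_2}$ check out, and since any solution's components necessarily satisfy the two projected equations, the displayed parametrization is both valid and exhaustive.
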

\begin{Lem}\emph{\cite{Xie2023}}\label{lem2.3}
	If $A, M, N, F$ and $K$ are given with appropriate sizes over $\mathbb{H}$, then
	$$
	r\left[\begin{array}{cc}
		A & M L_F \\ R_K N & 0
	\end{array}\right]=
	r\left[\begin{array}{ccc}
		A & M & 0 \\ N & 0 & K \\ 0 & F & 0
	\end{array}\right]-r(K)-r(F).
	$$
\end{Lem}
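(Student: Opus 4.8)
The plan is to establish the identity purely by rank-preserving block manipulations: pre- and post-multiplication by invertible quaternion matrices, supplemented by the two standard rank-splitting identities
$$r\,[\,P,\ Q\,]=r(P)+r(R_{P}Q),\qquad r\begin{bmatrix}P\\ Q\end{bmatrix}=r(Q)+r(PL_{Q}).$$
Both hold over $\mathbb{H}$ exactly as over a field, since their derivation uses only the four Moore--Penrose equations, the idempotency $R_{P}^{2}=R_{P}$, $L_{Q}^{2}=L_{Q}$, and the orthogonality relations $R_{P}P=0$, $P^{\dagger}R_{P}=0$, $QL_{Q}=0$, $L_{Q}Q^{\dagger}=0$; none of these is disturbed by the noncommutativity of quaternion multiplication. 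I would record these facts first so that they can be invoked freely.

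My first move is to manufacture the projector residuals $R_{K}N$ and $ML_{F}$ inside the large matrix. Writing
$$G=\begin{bmatrix} A & M & 0\\ N & 0 & K\\ 0 & F & 0\end{bmatrix},$$
I post-multiply $G$ by the invertible matrix that replaces its first block column by the first block column plus the third block column right-multiplied by $-K^{\dagger}N$. Since the third block column is $\left[\begin{smallmatrix}0\\ K\\ 0\end{smallmatrix}\right]$, this sends $N$ to $N-KK^{\dagger}N=R_{K}N$ and changes nothing else. I then pre-multiply by the invertible matrix that adds to the first block row the third block row left-multiplied by $-MF^{\dagger}$; as that row is $[0,\ F,\ 0]$, this sends $M$ to $M-MF^{\dagger}F=ML_{F}$. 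Both factors are invertible, so
$$r(G)=r\begin{bmatrix} A & ML_{F} & 0\\ R_{K}N & 0 & K\\ 0 & F & 0\end{bmatrix}=:r(H).$$

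Next I would peel off the two extra summands. Viewing $H$ as $[\mathbf{k},\ H']$ after reordering the block columns, where $\mathbf{k}=\left[\begin{smallmatrix}0\\ K\\ 0\end{smallmatrix}\right]$, the first splitting identity gives $r(H)=r(K)+r(R_{\mathbf{k}}H')$. Because $\mathbf{k}^{\dagger}=[0,\ K^{\dagger},\ 0]$, one gets $R_{\mathbf{k}}=\mathrm{diag}(I,R_{K},I)$, and $R_{K}^{2}=R_{K}$ absorbs the freshly created factor, leaving $R_{\mathbf{k}}H'=\left[\begin{smallmatrix} A & ML_{F}\\ R_{K}N & 0\\ 0 & F\end{smallmatrix}\right]$. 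Applying the second splitting identity to peel the block row $[0,\ F]$ off this matrix --- here $L_{[0,\ F]}=\mathrm{diag}(I,L_{F})$ and $L_{F}^{2}=L_{F}$ --- produces the summand $r(F)$ together with $r\left[\begin{smallmatrix} A & ML_{F}\\ R_{K}N & 0\end{smallmatrix}\right]$. Chaining the three equalities yields $r(G)=r(K)+r(F)+r\left[\begin{smallmatrix} A & ML_{F}\\ R_{K}N & 0\end{smallmatrix}\right]$, which is the assertion.

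I expect no conceptual difficulty; the only points requiring attention are bookkeeping ones. Under noncommutativity I must keep the elementary operations strictly one-sided (column operations as right factors, row operations as left factors) and be careful that $-K^{\dagger}N$ and $-MF^{\dagger}$ sit on the correct side. The one computation worth checking explicitly is that the block Moore--Penrose inverses of $\mathbf{k}$ and of $[0,\ F]$ are $[0,\ K^{\dagger},\ 0]$ and $\left[\begin{smallmatrix}0\\ F^{\dagger}\end{smallmatrix}\right]$, which follows by verifying the four defining equations blockwise. Once the residuals $R_{K}N$ and $ML_{F}$ are in place, the decoupling of the $K$- and $F$-blocks is forced by the orthogonality relations $K^{\dagger}R_{K}=0$ and $L_{F}F^{\dagger}=0$ and is essentially automatic; the real content of the argument is simply the choice of the two operations that create the projectors.
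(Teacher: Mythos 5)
Your argument is correct in every step: the two one-sided block operations, the blockwise verifications $\mathbf{k}^{\dagger}=\left[\,0,\ K^{\dagger},\ 0\,\right]$ and $L_{[0,\ F]}=\mathrm{diag}(I,L_{F})$, and the absorption of the extra projector factors via $R_{K}^{2}=R_{K}$, $L_{F}^{2}=L_{F}$ all check out, and the Marsaglia--Styan splitting identities you invoke do carry over to $\mathbb{H}$ for exactly the reasons you state. The paper itself offers no proof of this lemma (it is quoted from \cite{Xie2023}), but your derivation is precisely the standard block-Gaussian-elimination-plus-rank-splitting argument on which such rank equalities rest, so it matches the intended proof in approach.
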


\section{The general solution of \eqref{eq1.1}}
\par\setlength{\parindent}{2em}
By employing the Moore-Penrose inverses and ranks of matrices, we can use the previously established lemmas to deduce the conditions for the solvability of the dual quaternion matrix equations \eqref{eq1.1}. We also offer an expression for the general solution of \eqref{eq1.1}.  Now, we present the serious theorem of this paper.
\begin{Thm}\label{th3.1}
	Suppose that $A=A_0+A_1\epsilon\in\mathbb{DQ}^{n\times k},B=B_0+B_1\epsilon\in\mathbb{DQ}^{l\times m},$ and $C=C_0+C_1\epsilon\in\mathbb{DQ}^{n\times m}$ are given, set
	$$
	\begin{array}{l}
		A_{11}=A_1L_{A_0},A_2=R_{A_0}A_{11}, A_3=R_{A_0},C_3=-R_{A_0}C_0,
		B_2=R_{B_0}B_1,\\
		A_4=R_{A_2}R_{A_0},A_5=-A_4A_1A_0^{\dagger},
		C_4=A_4(A_1A_0^{\dagger}C_0+C_0B_0^{\dagger}B_1-C_1),\\
		A_6=R_{A_4}A_5, C_5=R_{A_4}C_4,B_3=B_2L_{B_0},B_4=C_4L_{B_0}.
	\end{array}
	$$
	Then the following descriptions are equivalent:
	\begin{enumerate}
		\item The dual quaternion matrix equation \eqref{eq1.1} is consistent.
		\item 
			$	C_3L_{B_0}=0,\ B_4L_{B_3}=0.$
		\item The ranks obey:
			\begin{equation}
				r\left[\begin{matrix}
					B_0 & 0 \\
					-C_0& A_0 
				\end{matrix}\right]=r(B_0)+r(A_0),  
				\label{eq3.2}
			\end{equation}
			\begin{equation}
				r\left[\begin{matrix}
					B_0 & 0 & 0 & 0\\
					B_1 & B_0 & 0 &0\\
					-C_1 & -C_0 & A_0 &A_1\\
					0 & 0 & 0 &A_0
				\end{matrix}\right]=r\begin{bmatrix}
					B_0 & 0\\ B_1 & B_0
				\end{bmatrix}+r\begin{bmatrix}
					A_0 & A_1\\ 0 & A_0
				\end{bmatrix}.
				\label{eq3.3}
			\end{equation}
	\end{enumerate}
	Under such conditions, the general solution of the matrix equation \eqref{eq1.1} can be expressed as $X=X_0+X_1\epsilon$ and $Y=Y_0+Y_1\epsilon$, where
	$$\begin{aligned}
			X_0&=A_0^{\dagger}(C_0+Y_0B_0)+L_{A_0}W,
			\\
			X_1&=A_0^{\dagger}[C_1+Y_0B_1+Y_1B_0-A_1A_0^{\dagger}(C_0+Y_0B_0)-A_{11}W]+L_{A_0}W_1,\\
			W&=A_2^{\dagger}A_3[C_1+Y_0B_1+Y_1B_0-A_1A_0^{\dagger}(C_0+Y_0B_0)]+L_{A_2}W_2,\\
			Y_0&=A_3^{\dagger}C_3B_0^{\dagger}+L_{A_3}U_1+U_2R_{B_0},
			\\
			Y_1&=A_4^{\dagger}(C_4-A_5U_1B_0-A_4U_2B_2)B_0^{\dagger}+L_{A_4}W_3+W_4R_{B_0},    \\
			U_1&=A_6^{\dagger}C_5B_0^{\dagger}+L_{A_6}W_5+W_6R_{B_0},\\
			\quad
			U_2&=A_4^{\dagger}B_4B_3^{\dagger}+L_{A_4}W_7+W_8R_{B_3},
	\end{aligned}
$$
	and the arbitrary matrices $W_i(i=1,\cdots,8)$ have the proper sizes.
\end{Thm}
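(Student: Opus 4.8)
The plan is to peel the dual structure off equation \eqref{eq1.1} and reduce everything to ordinary quaternion matrix equations, one projector at a time. Writing $X=X_0+X_1\epsilon$ and $Y=Y_0+Y_1\epsilon$ and applying the multiplication rule $CB=C_0B_0+(C_0B_1+C_1B_0)\epsilon$ together with Definition~\ref{de2.2.1}, equation \eqref{eq1.1} splits into its standard and infinitesimal parts:
\begin{align*}
A_0X_0-Y_0B_0&=C_0,\\
A_0X_1+A_1X_0-Y_0B_1-Y_1B_0&=C_1.
\end{align*}
So the whole problem is to solve this coupled quaternion system for $X_0,X_1,Y_0,Y_1$. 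First I would dispose of the standard-part equation: for fixed $Y_0$ it reads $A_0X_0=C_0+Y_0B_0$, solvable in $X_0$ if and only if $R_{A_0}(C_0+Y_0B_0)=0$, i.e. iff $A_3Y_0B_0=C_3$ with $A_3=R_{A_0}$ and $C_3=-R_{A_0}C_0$. Since $C_3$ lies in the range of $R_{A_0}$ and $R_{A_3}=A_0A_0^{\dagger}$, the condition $R_{A_3}C_3=0$ holds identically, so the only genuine constraint here is $C_3L_{B_0}=0$; solving the two-sided equation then gives $Y_0=A_3^{\dagger}C_3B_0^{\dagger}+L_{A_3}U_1+U_2R_{B_0}$ and $X_0=A_0^{\dagger}(C_0+Y_0B_0)+L_{A_0}W$, with $U_1,U_2,W$ free.

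Next I would substitute these into the infinitesimal-part equation. With $A_{11}=A_1L_{A_0}$ it becomes $A_0X_1+A_{11}W-Y_1B_0=D$, where $D=C_1+Y_0B_1-A_1A_0^{\dagger}(C_0+Y_0B_0)$. Eliminating $X_1$ by left-multiplication with $R_{A_0}$ yields $A_2W=R_{A_0}D+R_{A_0}Y_1B_0$ with $A_2=R_{A_0}A_{11}$; eliminating $W$ by left-multiplication with $R_{A_2}$ yields $A_4Y_1B_0=-A_4D$ with $A_4=R_{A_2}R_{A_0}$. Using the projector identities $A_4A_3^{\dagger}=A_4$, $A_4L_{A_3}=0$, $A_5A_3^{\dagger}=0$, $A_5L_{A_3}=A_5$ (all immediate from $A_3^{\dagger}=R_{A_0}$ and $A_0^{\dagger}R_{A_0}=0$) together with the definition of $C_4$, the right-hand side collapses to $-A_4D=C_4-A_5U_1B_0-A_4U_2B_2$; here the term $C_0B_0^{\dagger}B_1$ built into $C_4$ is exactly what absorbs the standard contribution $-A_4Y_0B_1$. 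Thus the entire problem is equivalent to the single three-term quaternion equation $A_4Y_1B_0+A_5U_1B_0+A_4U_2B_2=C_4$ in the unknowns $Y_1,U_1,U_2$, which is precisely the shape solved by \lemref{lem2.2} under the dictionary $A_0\to A_4$, $A_1\to A_5$, $B_0\to B_2$, $B_1\to B_0$, $C\to C_4$ and $X_0\to U_2$, $X_1\to Y_1$, $X_2\to U_1$.

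Applying \lemref{lem2.2}, its auxiliary quantities become exactly $A_6=R_{A_4}A_5$, $C_5=R_{A_4}C_4$, $B_3=B_2L_{B_0}$, $B_4=C_4L_{B_0}$, and its three solvability conditions translate into $R_{A_6}C_5=0$, $R_{A_4}B_4=0$, $B_4L_{B_3}=0$. The crucial simplification is that $C_4$ lies in the range of $A_4$ by construction, so $C_5=R_{A_4}C_4=0$, whence $R_{A_6}C_5=0$ and $R_{A_4}B_4=C_5L_{B_0}=0$ hold automatically; only $B_4L_{B_3}=0$ survives. Together with $C_3L_{B_0}=0$ from the standard part this is exactly statement (2), establishing (1)$\,\Leftrightarrow\,$(2). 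The explicit expressions for $U_1,U_2,Y_1$ are then read off from \lemref{lem2.2} (with the $A_6^{\dagger}C_5B_0^{\dagger}$ term retained formally though zero), and back-substitution recovers $W$, $X_0$ and $X_1$, assembling the displayed general solution. Finally, for (2)$\,\Leftrightarrow\,$(3) I would use the rank identity $r\!\left[\begin{smallmatrix}C_0&A_0\\ B_0&0\end{smallmatrix}\right]=r(A_0)+r(B_0)+r(R_{A_0}C_0L_{B_0})$ to turn $C_3L_{B_0}=0$ into \eqref{eq3.2}, and repeated applications of \lemref{lem2.3} to unfold $B_4L_{B_3}=0$ into the $4\times4$ block rank equality \eqref{eq3.3}.

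I expect two steps to be the main obstacles. The first is the algebraic collapse $-A_4D=C_4-A_5U_1B_0-A_4U_2B_2$, which depends delicately on the four projector identities above and on the precise form of $C_4$, and which must be carried out keeping the free parameters $U_1,U_2$ intact so that the same $U_1,U_2$ feed consistently back into $Y_0$. The second, and harder, is the passage from $B_4L_{B_3}=0$ to the rank identity \eqref{eq3.3}: because the projectors are nested as $B_3=R_{B_0}B_1L_{B_0}$ and $B_4=C_4L_{B_0}$ inside $C_4=A_4(\cdots)$, several careful iterations of \lemref{lem2.3} are needed to cancel the interior ranks of $A_0,A_2,B_0$ and collapse the whole expression into the single clean block determinantal form.
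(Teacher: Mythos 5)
Your proposal is correct and follows essentially the same route as the paper: the same splitting of \eqref{eq1.1} into the quaternion system \eqref{eq2.3}, the same resolution of the standard part via $A_3Y_0B_0=C_3$ (with $R_{A_3}C_3=0$ automatic), the same reduction by substitution to the three-term equation $A_4Y_1B_0+A_5U_1B_0+A_4U_2B_2=C_4$ handled by Lemma \ref{lem2.2} under exactly your dictionary, and the same use of Lemma \ref{lem2.3} for the rank characterization $(2)\Leftrightarrow(3)$. Your one deviation is actually a simplification: you observe that $C_5=R_{A_4}C_4=0$ identically (since $R_{A_4}A_4=0$ and $C_4=A_4(\cdots)$), which makes $R_{A_6}C_5=0$ and $R_{A_4}B_4=C_5L_{B_0}=0$ immediate, whereas the paper verifies these two identities by block Gaussian elimination and rank computations via Lemma \ref{lem2.3}; your direct argument is cleaner and also explains why the term $A_6^{\dagger}C_5B_0^{\dagger}$ in $U_1$ is formally redundant.
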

\begin{proof}
	The proof is split into two parts.
	\par\setlength{\parindent}{2em}\textbf{Part 1.}
	The matrix equation \eqref{eq1.1} over $\mathbb{DQ}$  is equivalent to a system of matrix equations 
	\begin{equation}\label{eq2.3}
		\left\{\begin{array}{l}
			A_0X_0-Y_0B_0=C_0,\\
			A_0X_1+A_1X_0-Y_0B_1-Y_1B_0=C_1
		\end{array}\right.
	\end{equation}
	over $\mathbb{H}$, as per Definition \ref{de2.2.1} and the definition of dual quaternion matrix multiplication. As a result, it is possible to solve the dual quaternion matrix equation \eqref{eq1.1} by solving the system of quaternion matrix equations \eqref{eq2.3}, which is also being studied for the first time.
	\par\setlength{\parindent}{2em}\textbf{Part 2.}$(1)\Longleftrightarrow(2)$. It is evident that the system \eqref{eq2.3} can be expressed as
	\begin{equation}\label{eq3.11}
		\left\{\begin{array}{l}
			A_0X_0=C_0+Y_0B_0,\\
			A_0X_1+A_1X_0=C_1+Y_0B_1+Y_1B_0.
		\end{array}\right.
	\end{equation}
	Lemma \ref{lem2.1} leads us to the conclusion that  
	\begin{equation}\label{eq3.12}
		R_{A_0}(C_0+Y_0B_0)=0
	\end{equation}
	and
	\begin{equation}\label{eq3.13}
		R_{A_2}R_{A_0}[C_1+Y_0B_1+Y_1B_0-A_1A_0^{\dagger}(C_0+Y_0B_0)]=0	
	\end{equation}
	are the necessary and sufficient condition that makes the system \eqref{eq3.11} consistent. In this case, the general solution of the system \eqref{eq3.11} can be written as $X=X_0+X_1\epsilon$, where
	$$
	\begin{aligned}
		X_0&=A_0^{\dagger}(C_0+Y_0B_0)+L_{A_0}W,\\
		X_1&=A_0^{\dagger}[C_1+Y_0B_1+Y_1B_0-A_1A_0^{\dagger}(C_0+Y_0B_0)-A_{11}W]+L_{A_0}W_1,\\
		W&=A_2^{\dagger}A_3[C_1+Y_0B_1+Y_1B_0-A_1A_0^{\dagger}(C_0+Y_0B_0)]+L_{A_2}W_2,
	\end{aligned}
	$$
	and $W_1,W_2$ are arbitrary matrices over $\mathbb{H}$. Clearly, the quaternion matrix equation \eqref{eq3.12} is equivalent to
	\begin{equation}\label{eq3.14}
		A_3Y_0B_0=C_3.
	\end{equation}
	According to Lemma \ref{lem2.2}, we have the quaternion matrix equation \eqref{eq3.14} is solvable only if 
	$$
	R_{A_3}C_3=0, C_3L_{B_0}=0.
	$$
	Since $A_0^{\dagger}R_{A_0}=0$, it always holds true that $R_{A_3}C_3=0$. In this situation, the quaternion matrix equation \eqref{eq3.14} has a solution as long as $C_3L_{B_0}=0$, and the general solution can be represented by  
	\begin{equation}\label{eq3.15}
		Y_0=A_3^{\dagger}C_3B_0^{\dagger}+L_{A_3}U_1+U_2R_{B_0}.
	\end{equation}
	Substituting the equation \eqref{eq3.15} into the matrix equation \eqref{eq3.13}, we can determine that
	\begin{equation}\label{eq3.16}
		A_4U_2B_2+A_4Y_1B_0+A_5U_1B_0=C_4.
	\end{equation}
	By applying Lemma \ref{lem2.2}, we can derive the quaternion matrix equation \eqref{eq3.16} is solvable if and only if
	$$
	\begin{aligned}
		R_{A_6}C_5=0,\ R_{A_4}B_4=0,\ B_4L_{B_3}=0.
	\end{aligned}
	$$
	In this case, the general solution of the matrix equation \eqref{eq3.16} can be expressed as
	$$
	\begin{array}{l}
		Y_1=A_4^{\dagger}(C_4-A_5U_1B_0-A_4U_2B_2)B_0^{\dagger}+L_{A_4}W_3+W_4R_{B_0},\\
		U_1=A_6^{\dagger}C_5B_0^{\dagger}+L_{A_6}W_5+W_6R_{B_0},\\
		U_2=A_4^{\dagger}B_4B_3^{\dagger}+L_{A_4}W_7+W_8R_{B_3},
	\end{array}
	$$
	where $W_i(i=3,\cdots,8)$ denote arbitrary matrices of appropriate dimensions over $\mathbb{H}$. 
	\par\setlength{\parindent}{2em}
	In the subsequent proof, the verification of $R_{A_6}C_5=0,\ R_{A_4}B_4=0$ as identities will be provided, thereby ensuring that the matrix equation \eqref{eq3.16} is solvable if and only if $B_4L_{B_3}=0$. At this stage, the general solution expressions for the dual quaternion matrix equation \eqref{eq1.1} are given as $X=X_0+X_1\epsilon$ and $Y=Y_0+Y_1\epsilon$.
	\par\setlength{\parindent}{2em}$(2)\Longleftrightarrow(3)$. We need to show the equivalence between $C_3L_{B_0}=0$ and \eqref{eq3.2}, and between $B_4L_{B_3}=0$ and \eqref{eq3.3}, respectively. 
	\par\setlength{\parindent}{2em}Referring to Lemma \ref{lem2.3}, we obtain
	$$
		C_3L_{B_0}=0 \Leftrightarrow r(C_3L_{B_0})=0 \Leftrightarrow r\begin{bmatrix}
			B_0 & 0\\-C_0 & A_0
		\end{bmatrix}=r(A_0)+r(B_0).
	$$
	This indicates that $C_3L_{B_0}=0$ is equivalent to \eqref{eq3.2}. Now, we focus on establishing the equivalence between $B_4L_{B_3}=0$ and \eqref{eq3.3}, while also proving $R_{A_6}C_5=0, R_{A_4}B_4=0$ are two identities. The following statements are valid in light of block Gaussian elimination and Lemma \ref{lem2.3}.
	$$
	\begin{aligned}
		R_{A_6}C_5=0 &\Leftrightarrow r(R_{A_6}C_5)=0 \Leftrightarrow
		r\left[\begin{matrix}
			A_6 & C_5
		\end{matrix}\right]=r(A_6),\\
		&\Leftrightarrow r\left[\begin{matrix}
			A_4 & 0 &0
		\end{matrix}\right]=r\left[\begin{matrix}
			A_4 & 0
		\end{matrix}\right].
	\end{aligned}
	$$
	In the same way, we may show that
	$$	
	\begin{aligned}
		R_{A_4}B_4=0
		\Leftrightarrow r\left[\begin{matrix}
			A_4 & C_4 \\
			0 & B_0
		\end{matrix}\right]=r(A_4)+r(B_0)
		\Leftrightarrow r\left[\begin{matrix}
			A_4 &0 \\
			0   &B_0
		\end{matrix}\right]=r(A_4)+r(B_0).
	\end{aligned}$$
	It follows that the equations $R_{A_6}C_5=0$ and $R_{A_4}B_4=0$ are all identities. $B_4L_{B_3}=0$ is subjected to Lemma \ref{lem2.3}, yielding
		$$
		\begin{aligned}
			r(B_4L_{B_{3}})=0&\Leftrightarrow 
			r\left[\begin{matrix}
				B_4  \\
				B_{3} 
			\end{matrix}\right]=r(B_3) ,\\
			&\Leftrightarrow
			r\left[\begin{matrix}
				B_0 & 0 & 0 &0\\
				B_1 & B_0 & 0&0\\
				-C_{1} &-C_0& A_0 &A_1\\
				0&0&0&A_0
			\end{matrix}\right]=r\left[\begin{matrix}
				B_0 & 0 \\ B_1 & B_0
			\end{matrix}\right]+r\left[\begin{matrix}
				A_0&A_1\\0 & A_0
			\end{matrix}\right],
		\end{aligned}$$
	i.e., $B_4L_{B_3}=0$ $\Longleftrightarrow$ \eqref{eq3.3}.
\end{proof}
\par\setlength{\parindent}{2em}Utilizing Theorem \ref{th3.1} in various contexts, we present both necessary and sufficient conditions for the solvability of the dual quaternion matrix equations $YB=C$ and $AX=YB$, along with providing the general solution expressions for them.
\begin{Cor}\emph{\cite{Xie2023}}\label{cor3.1}
	Let $B=B_0+B_1\epsilon\in\mathbb{DQ}^{k\times l}$, and $C=C_0+C_1\epsilon\in\mathbb{DQ}^{n\times l}$. Denote
	$$
	\begin{array}{l}
		B_{2}=R_{B_0}B_1,\ C_2=C_1-C_0B_0^{\dagger}B_1, B_{00}=B_{2}L_{B_0},\ C_{00}=C_{2}L_{B_0}.
	\end{array}
	$$
	Then the dual quaternion matrix equation $YB=C$ is consistent if and only if
	$$
	C_0L_{B_0}=0,\ C_{00}L_{B_{00}}=0,
	$$
	or
	$$
	r\left[\begin{matrix}
		B_0 \\
		C_0
	\end{matrix}\right]=r(B_0),\ r\left[\begin{matrix}
		C_1 & C_0\\
		B_0 & 0\\
		B_1 & B_0
	\end{matrix}\right]=r\left[\begin{matrix}
		B_0 & 0\\
		B_1 & B_0
	\end{matrix}\right].
	$$
	In such circumstance, the general solution can be expressed as $Y=Y_0+Y_1\epsilon$, where
	$$
	\begin{array}{l}
		Y_0=C_0B_0^{\dagger}+WR_{B_0},\\
		Y_1=(C_{2}-WB_{2})B_0^{\dagger}+W_1R_{B_0},\\
		W=C_{00}B_{00}^{\dagger}+W_2R_{B_{00}},
	\end{array}
	$$
	and $W_i (i = 1,2)$ represent arbitrary matrices over $\mathbb{H}$ with the suitable dimensions.
\end{Cor}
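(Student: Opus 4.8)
The plan is to mirror the two-stage reduction used in the proof of \thmref{th3.1}, specialized to the situation where the coefficient $A$ is absent. First I would expand the product $YB=(Y_0+Y_1\epsilon)(B_0+B_1\epsilon)=Y_0B_0+(Y_0B_1+Y_1B_0)\epsilon$ and match the standard and infinitesimal parts against $C=C_0+C_1\epsilon$ by Definition \ref{de2.2.1} and the multiplication rule for dual quaternion matrices. This converts the single dual quaternion equation $YB=C$ into the equivalent quaternion system
$$Y_0B_0=C_0,\qquad Y_0B_1+Y_1B_0=C_1,$$
so the whole problem becomes solving two coupled one-sided quaternion matrix equations.

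Next I would solve this system hierarchically. The equation $Y_0B_0=C_0$ is a standard one-sided equation, consistent if and only if $C_0L_{B_0}=0$, with general solution $Y_0=C_0B_0^{\dagger}+WR_{B_0}$ for an arbitrary $W$. Substituting this into the second equation and using $R_{B_0}B_1=B_2$ and $C_1-C_0B_0^{\dagger}B_1=C_2$ yields $Y_1B_0=C_2-WB_2$. Reading this as an equation for $Y_1$ (with $W$ still free), consistency forces $(C_2-WB_2)L_{B_0}=0$, i.e. $WB_{00}=C_{00}$ where $B_{00}=B_2L_{B_0}$ and $C_{00}=C_2L_{B_0}$. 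This last one-sided equation is solvable precisely when $C_{00}L_{B_{00}}=0$, giving $W=C_{00}B_{00}^{\dagger}+W_2R_{B_{00}}$; back-substitution then delivers the stated closed forms for $Y_1$ and $Y_0$. In this way the two Moore--Penrose solvability conditions $C_0L_{B_0}=0$ and $C_{00}L_{B_{00}}=0$ emerge together with the general solution. Alternatively, one could simply specialize \thmref{th3.1} after the harmless substitution $C\mapsto -C$ by setting $A_0=A_1=0$, whence $A_3=I$, $C_3=-C_0$, $A_4=I$, $B_3=B_{00}$ and $B_4=-C_{00}$, so that the theorem's conditions $C_3L_{B_0}=0$ and $B_4L_{B_3}=0$ collapse exactly to the two conditions above.

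Finally I would translate the two Moore--Penrose conditions into the advertised rank equalities, which is the step I expect to demand the most care. The first is immediate: from $r\begin{bmatrix}B_0\\ C_0\end{bmatrix}=r(B_0)+r(C_0L_{B_0})$ we read off $C_0L_{B_0}=0\Leftrightarrow r\begin{bmatrix}B_0\\ C_0\end{bmatrix}=r(B_0)$. The substantive obstacle is the second equivalence,
$$C_{00}L_{B_{00}}=0\ \Leftrightarrow\ r\begin{bmatrix} C_1 & C_0\\ B_0 & 0\\ B_1 & B_0\end{bmatrix}=r\begin{bmatrix} B_0 & 0\\ B_1 & B_0\end{bmatrix},$$
because here $C_{00}$ and $B_{00}$ each carry a trailing projector $L_{B_0}$, while $B_2=R_{B_0}B_1$ and $C_2=C_1-C_0B_0^{\dagger}B_1$ themselves conceal the projector $R_{B_0}$ and the coupling term $C_0B_0^{\dagger}B_1$. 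The plan is to start from $C_{00}L_{B_{00}}=0\Leftrightarrow r\begin{bmatrix}C_{00}\\ B_{00}\end{bmatrix}=r(B_{00})$ and then apply \lemref{lem2.3} repeatedly, together with block Gaussian elimination, to clear every embedded $L$- and $R$-factor and expand the nested projectors into an honest block matrix whose rank collapses onto the displayed $3\times 2$ form. Keeping the bookkeeping of these eliminations exact, so that the ranks on both sides agree after the projector ranks $r(B_0)$ are accounted for, is the main technical hurdle; everything else is the routine back-substitution already organized above.
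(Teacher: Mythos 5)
Your overall structure is sound, and your alternative route is in fact the paper's own: the paper gives no independent proof of Corollary \ref{cor3.1} (it is quoted from \cite{Xie2023} and introduced with ``Utilizing Theorem \ref{th3.1} in various contexts\dots''), so the intended derivation is precisely your specialization $A_0=A_1=0$. Your bookkeeping there is right up to harmless signs: setting $A=0$ directly in $AX-YB=C$ gives $C_3=-C_0$, $C_4=-C_2$, $B_4=-C_{00}$ and one recovers $YB=C$ by negating $Y$; with your substitution $C\mapsto -C$ the signs are $C_3=+C_0$, $B_4=+C_{00}$. Either way $A_3=A_4=I$ forces $L_{A_3}=L_{A_4}=0$, the conditions collapse to $C_0L_{B_0}=0$ and $C_{00}L_{B_{00}}=0$, and the rank conditions \eqref{eq3.2}--\eqref{eq3.3} collapse (after deleting zero blocks) to the two displayed rank equalities. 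Your direct hierarchical derivation of the Moore--Penrose conditions and the solution formulas $Y_0$, $Y_1$, $W$ is also correct and is exactly the right-sided analogue of Lemma \ref{lem2.1}.

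There is, however, a genuine flaw in your plan for the step you yourself flagged as hardest: the \emph{standalone} equivalence $C_{00}L_{B_{00}}=0\Leftrightarrow r\bigl[\begin{smallmatrix} C_1 & C_0\\ B_0 & 0\\ B_1 & B_0\end{smallmatrix}\bigr]=r\bigl[\begin{smallmatrix} B_0 & 0\\ B_1 & B_0\end{smallmatrix}\bigr]$ is false, so no amount of projector-clearing via Lemma \ref{lem2.3} will establish it. Take scalars $B_0=0$, $B_1=1$, $C_0=1$, $C_1=0$: then $B_0^{\dagger}=0$, $L_{B_0}=R_{B_0}=1$, so $B_2=1$, $C_2=0$, $B_{00}=1$, $C_{00}=0$ and $C_{00}L_{B_{00}}=0$, yet the left rank is $2$ while the right rank is $1$. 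What is true is the equivalence of the condition \emph{pairs}: the second rank equality is, after permuting rows and columns, exactly the consistency condition
$$
r\begin{bmatrix} \widetilde{C}\\ \widetilde{B}\end{bmatrix}=r\bigl(\widetilde{B}\bigr),
\qquad
\widetilde{B}=\begin{bmatrix} B_1 & B_0\\ B_0 & 0\end{bmatrix},\quad
\widetilde{C}=\begin{bmatrix} C_1 & C_0\end{bmatrix},
$$
for the stacked one-sided equation $\begin{bmatrix} Y_0 & Y_1\end{bmatrix}\widetilde{B}=\widetilde{C}$, which is equivalent to the whole system $\{Y_0B_0=C_0,\ Y_0B_1+Y_1B_0=C_1\}$ being consistent, i.e.\ to the conjunction $C_0L_{B_0}=0$ \emph{and} $C_{00}L_{B_{00}}=0$; in particular it implies the first rank condition, and the pairwise matching only becomes valid once $C_0L_{B_0}=0$ is assumed (this is also where the hidden term $C_0B_0^{\dagger}B_1$ in $C_2$ becomes removable, since $C_0L_{B_0}=0$ gives $C_0=C_0B_0^{\dagger}B_0$). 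So repair your third step by proving the joint equivalence --- either via the stacked formulation above or by carrying the first condition along in the rank computation; note that the paper's proof of Theorem \ref{th3.1} phrases the analogous step in the same item-by-item way, and the counterexample above (with $A=0$) shows that there too only the joint equivalence $(2)\Leftrightarrow(3)$ is correct.
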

\begin{Cor}\label{cor3.2}
	Assume that $A,B$ are given with the suitable dimensions over $\mathbb{DQ}$, set
	$$
	\begin{array}{l}
		A_{11}=A_1L_{A_0},A_2=R_{A_0}A_{11}, A_3=R_{A_0},
		B_2=R_{B_0}B_1,A_4=R_{A_2}R_{A_0},\\
		A_5=-A_4A_1A_0^{\dagger},
		A_6=R_{A_4}A_5,B_3=B_2L_{B_0}.
	\end{array}
	$$
	Then the dual quaternion matrix equation \eqref{eq1.2} is solvable, and the general solution can be expressed as $X=X_0+X_1\epsilon$ and $Y=Y_0+Y_1\epsilon$, where
	$$
	\begin{aligned}
			X_0&=A_0^{\dagger}Y_0B_0+L_{A_0}W,
			\\
			X_1&=A_0^{\dagger}[Y_0B_1+Y_1B_0-A_1A_0^{\dagger}Y_0B_0-A_{11}W]+L_{A_0}W_1, \\
			W&=A_2^{\dagger}A_3[Y_0B_1+Y_1B_0-A_1A_0^{\dagger}Y_0B_0]+L_{A_2}W_2,    \\
			Y_0&=L_{A_3}U_1+U_2R_{B_0},
			\\
			Y_1&=A_4^{\dagger}(-A_5U_1B_0-A_4U_2B_2)B_0^{\dagger}+L_{A_4}W_3+W_4R_{B_0}, \\
			U_1&=L_{A_6}W_5+W_6R_{B_0},\\
			\quad
			U_2&=L_{A_4}W_7+W_8R_{B_3},
	\end{aligned}
	$$
	and $W_i(i=1,\cdots,8)$ denote arbitrary matrices over $\mathbb{H}$.
\end{Cor}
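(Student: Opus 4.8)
The plan is to recognize that \eqref{eq1.2}, namely $AX=YB$, is precisely the homogeneous special case $C=0$ of the generalized equation \eqref{eq1.1}, $AX-YB=C$. Hence the entire content of the corollary should follow by specializing Theorem \ref{th3.1} to $C_0=0$ and $C_1=0$, and no independent argument is required. First I would observe that all the auxiliary matrices named in Corollary \ref{cor3.2}---namely $A_{11},A_2,A_3,B_2,A_4,A_5,A_6,B_3$---are defined identically to their counterparts in Theorem \ref{th3.1} and depend only on $A$ and $B$, not on $C$. Thus they are unchanged under the specialization, and the only quantities that are affected are the $C$-dependent matrices $C_3,C_4,C_5,B_4$.

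Next I would evaluate those $C$-dependent matrices. Setting $C_0=0$ and $C_1=0$ gives immediately $C_3=-R_{A_0}C_0=0$ and $C_4=A_4(A_1A_0^{\dagger}C_0+C_0B_0^{\dagger}B_1-C_1)=0$, whence $C_5=R_{A_4}C_4=0$ and $B_4=C_4L_{B_0}=0$. I would then invoke the solvability characterization of Theorem \ref{th3.1}: consistency is equivalent to $C_3L_{B_0}=0$ together with $B_4L_{B_3}=0$. Since both $C_3$ and $B_4$ vanish, these two conditions collapse to the trivial identity $0=0$, so \eqref{eq1.2} is consistent without any restriction on $A$ and $B$. (This is also transparent from the fact that $X=0,\,Y=0$ is always a solution.)

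Finally I would produce the general solution by substituting $C_0=C_1=0$, $C_3=0$, $C_4=0$, $C_5=0$, and $B_4=0$ directly into the eight solution formulas of Theorem \ref{th3.1}. Term by term, every $C$-carrying summand drops out: $X_0=A_0^{\dagger}(C_0+Y_0B_0)+L_{A_0}W$ becomes $A_0^{\dagger}Y_0B_0+L_{A_0}W$; the bracketed expressions in $X_1$ and $W$ lose their $C_1$ and $A_1A_0^{\dagger}C_0$ contributions; $Y_0=A_3^{\dagger}C_3B_0^{\dagger}+L_{A_3}U_1+U_2R_{B_0}$ reduces to $L_{A_3}U_1+U_2R_{B_0}$; $Y_1$ loses its $C_4$ term; $U_1=A_6^{\dagger}C_5B_0^{\dagger}+L_{A_6}W_5+W_6R_{B_0}$ reduces to $L_{A_6}W_5+W_6R_{B_0}$; and $U_2=A_4^{\dagger}B_4B_3^{\dagger}+L_{A_4}W_7+W_8R_{B_3}$ reduces to $L_{A_4}W_7+W_8R_{B_3}$. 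Each simplified expression matches exactly the corresponding line in the statement of Corollary \ref{cor3.2}.

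There is no genuine obstacle in this argument, since the corollary is a clean specialization of the already-proved main theorem; the only care required is the purely mechanical verification that each $C$-dependent summand indeed vanishes when $C_0=C_1=0$, which I have indicated above. Accordingly the proof amounts to a short invocation of Theorem \ref{th3.1} followed by the observation that the solvability conditions become vacuous and the solution formulas simplify as displayed.
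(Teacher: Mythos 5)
Your proposal is correct and follows exactly the route the paper intends: the corollary is stated as a direct specialization of Theorem \ref{th3.1} to $C_0=C_1=0$, under which $C_3$, $C_4$, $C_5$, and $B_4$ all vanish, the solvability conditions $C_3L_{B_0}=0$ and $B_4L_{B_3}=0$ become vacuous, and the solution formulas reduce term by term to those displayed. Your mechanical verification of each vanishing summand is precisely the (omitted) argument the paper relies on.
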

\begin{remark}
	The previously mentioned hand-eye calibration model can be reformulated to address the dual quaternion equation $q_Aq_X=q_Yq_B$ \cite{Li1}, which is a special case of the dual quaternion matrix equation \eqref{eq1.2}.
\end{remark}
\section{An application}
In this section, we design an encryption and decryption scheme for color images based on generalized Sylvester-type dual quaternion matrix equation \eqref{eq1.1}. Furthermore, we conduct experiments with a set of color images to verify the feasibility of this scheme.
\par\setlength{\parindent}{2em}We know that a quaternion matrix can represent a color image, and since both the standard part and the infinitesimal part of a dual quaternion matrix are quaternion matrices, we can use a dual quaternion matrix to represent two color images. Now, we present a scheme for encrypting and decrypting color images using the generalized Sylvester-type dual quaternion matrix equation \eqref{eq1.1}, as shown in the figure below.
\begin{figure}[ht]
	\begin{center}
		\includegraphics[width=0.7\linewidth]{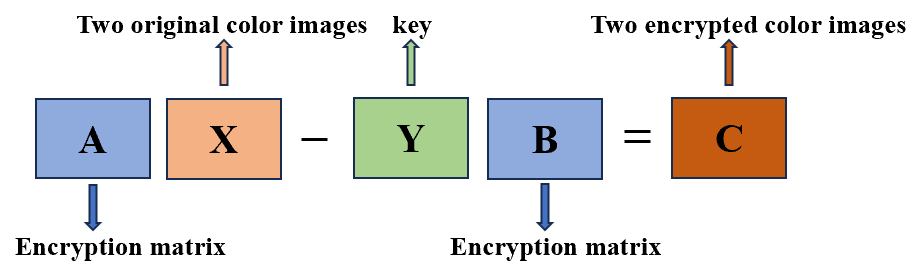}
		\caption{Encryption of two color images based on \eqref{eq1.1}}
		\label{fig1}
	\end{center}
\end{figure}
\par\setlength{\parindent}{0em}In Fig. \ref{fig1}, the dual quaternion matrices $A$ and $B$ form a password book. To ensure that the encrypted information is difficult to crack, it is required that the standard part and the infinitesimal part of the encryption matrices $A$ and $B$ are full-rank matrices either in rows or columns. The dual quaternion matrix $Y$, which also represents two color images, is the key used for both encryption and decryption.
\par\setlength{\parindent}{2em} We randomly select two color images as the original color images to be encrypted, and two other color images as the keys, as shown in Fig. \ref{fig2}.
\begin{figure}[!htp]
	\begin{center}
		\includegraphics[width=0.5\linewidth]{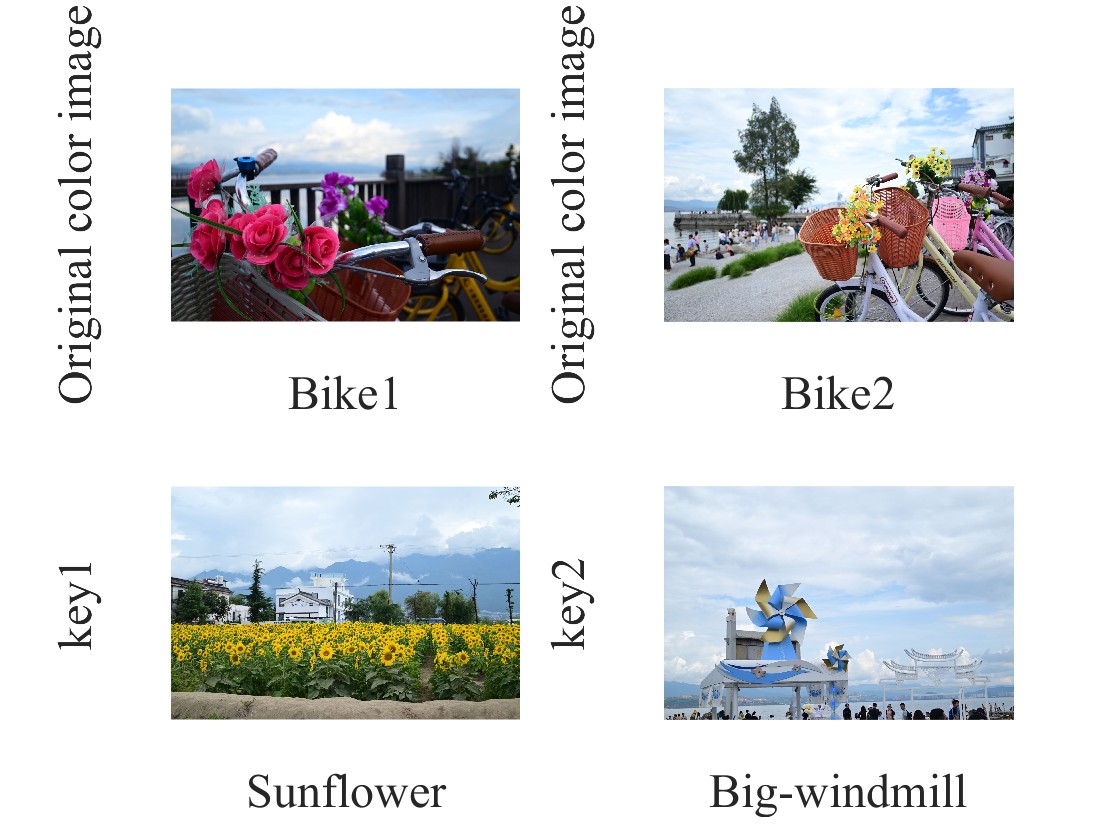}
		\caption{Encrypted two original color images and key}
		\label{fig2}
	\end{center}
\end{figure}
\par\setlength{\parindent}{2em}Based on the encryption principle shown in Fig. \ref{fig1}, we encrypt the color images ``bike1" and ``bike2". The encrypted images are shown below.
\begin{figure}[!htp]
	\begin{center}
		\includegraphics[width=0.5\linewidth]{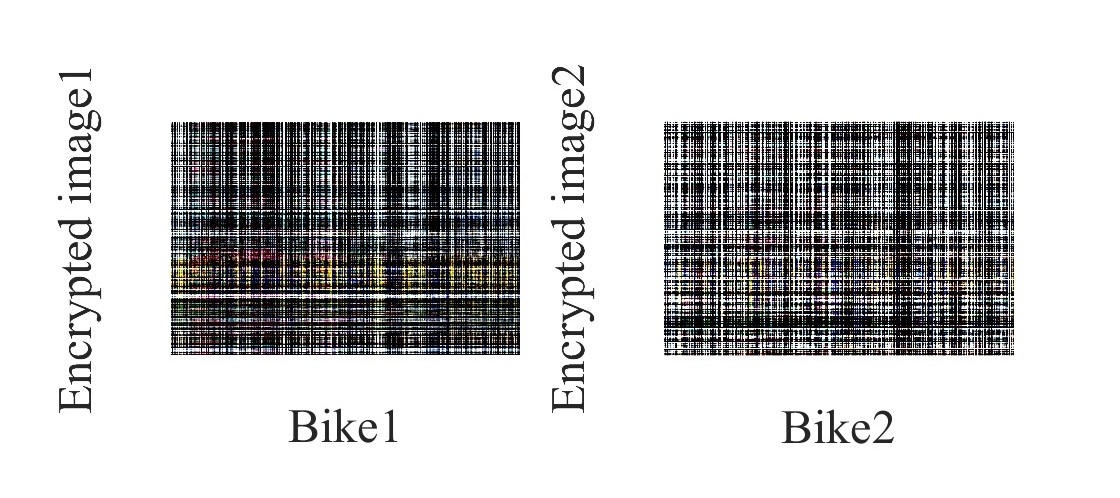}
		\caption{Two encrypted color images}
		\label{fig3}
	\end{center}
\end{figure}
\par\setlength{\parindent}{2em}Now, according to the encryption principle in Fig.\ref{fig1} and Theorem \ref{th3.1}, we decrypt the two color images in Fig.\ref{fig3}. The decrypted images are:
\begin{figure}[h]
	\begin{center}
		\includegraphics[width=0.5\linewidth]{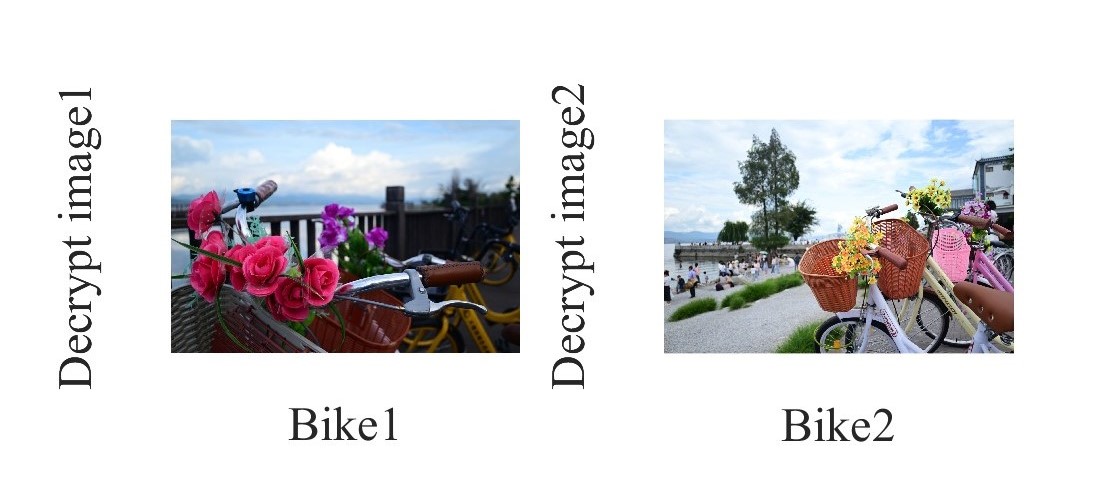}
		\caption{Two decrypted color images}
		\label{fig4}
	\end{center}
\end{figure}

\par\setlength{\parindent}{2em}Evidently, the decrypted images are visually identical to the original color images. To evaluate the quality of image restoration, we use the structural similarity index measure (SSIM). See the table below for details.
\begin{center}
	\begin{tabular}{c|c}
		\hline
		Color image name & SSIM \\ \hline
		Bike 1 & 0.99 \\ \hline
		Bike 2 & 0.99 \\ \hline
	\end{tabular}
\end{center}
From the table above, it can be observed that the decrypted images are almost identical to the original images. Therefore, the encryption and decryption scheme designed using generalized Sylvester-type dual quaternion matrix equation \eqref{eq1.1} is highly feasible.
\section{Conclusion}
In this paper, utilizing the Moore-Penrose inverses and ranks of matrices, we have provided necessary and sufficient conditions for the solvability of the dual quaternion matrix equation \eqref{eq1.1}. We have also presented a general expression for the solutions using the Moore-Penrose inverses of matrices when it is consistent. Furthermore, we have investigated the dual quaternion matrix equation \eqref{eq1.2} and presented a general expression for its solutions. Since the dual quaternion matrix equation \eqref{eq1.2} encompasses the dual quaternion equation $q_{A}q_{X}=q_{Y}q_{B}$, the analytical solutions of the matrix equation \eqref{eq1.2} may be beneficial for research on hand-eye calibration problems. Additionally, we have discussed some other special cases of the dual quaternion matrix equation \eqref{eq1.1}. Ultimately, we presented the application of the generalized Sylvester-type dual quaternion matrix equation \eqref{eq1.1} in color image processing and demonstrated through experiments that the scheme in Fig. \ref{fig1} is highly feasible.

\par\setlength{\parindent}{0em}\textbf{Conflicts of Interest}: The authors declare no conflicts of interest.

\end{document}